\documentclass[12pt,a4paper, reqno]{amsart}
\usepackage[abbrev]{amsrefs}
\usepackage{amssymb,mathrsfs}
\usepackage{mathtools}

\usepackage{etoolbox}

\makeatletter
\patchcmd{\@settitle}{\uppercasenonmath\@title}{}{}{}
\patchcmd{\@setauthors}{\MakeUppercase}{}{}{}
\patchcmd{\section}{\scshape}{}{}{}
\makeatother

\theoremstyle{plain}
 \newtheorem*{theorem}{Theorem}
\theoremstyle{definition}
 \newtheorem*{problem}{Problem}
 \newtheorem*{remark}{Remark}
 \newtheorem*{ack}{Acknowledgment}

\def\vol{\mathop{\mathrm{vol}}\nolimits}

\begin{document}
\title[]{\large  An example of an optimal transport map on a sphere}
\author[]{Asuka Takatsu$^{\dagger}$}
\keywords{optimal transport}
\subjclass[2020]{49Q22, 90B06}
\thanks{$\dagger$
Graduate School of Mathematical Sciences, The University of Tokyo,
3-8-1 Komaba, Meguro-ku, Tokyo 153-8914, Japan;
 RIKEN Center for Advanced Intelligence Project (AIP), 
Nihonbashi 1-chome Mitsui Building, 15th floor,
1-4-1 Nihonbashi, Chuo-ku, Tokyo 103-0027, Japan\\
\qquad{\sf asuka-takatsu@g.ecc.u-tokyo.ac.jp}}
\maketitle
\vspace{-30pt}
\begin{abstract}
We construct an optimal transport map  between 
uniform distributions on 
two antipodal annuli  of a sphere.\\
\end{abstract}
Let $(M, \langle \cdot, \cdot \rangle_M) $ be a connected, complete, smooth Riemannian manifold
of dimension $n\in \mathbb{N}$.
We denote by $d_M$ and $\vol_M$ the Riemannian distance function and the Riemannian volume measure on~$M$, respectively.
For $p\in [1,\infty)$,
let 
$\mathcal{P}_p(M)$ denote the set of  Borel probability measures $\mu$ on $M$ satisfying 
\[
\int_{M}d_M(x_0,x)^p d\mu(x)<\infty
\quad
\text{for some $x_0\in  M$}.
\]
For $\mu,\nu \in \mathcal{P}_p(M)$, 
a Borel set $E\subset M$ with $\mu(E)=1$,   and  a Borel map $T\colon E\to M$,
the expression $T_\sharp \mu =\nu$
means that 
\[
\int_M h(y) d\nu(y)=
\int_M h(T(x)) d \mu(x) \quad \text{holds for any } h\in C_b(M).
\]
The Monge problem is formulated as follows.
\begin{problem}
For $\mu,\nu \in \mathcal{P}_p(M)$, 
solve 
\[
\inf\left\{\int_{M}d_M(x,T(x))^pd\mu(x)\biggm| T\colon M\to M \text{ is Borel such that }T_\sharp \mu=\nu\right\}
\]
and find a minimizer if it exists.
\end{problem}
If $\mu$ is absolutely continuous with respect to $\vol_M$,
then the Monge Problem has a minimizer~$T$,
which is uniquely determined $\mu$-almost everywhere 
(see \cite{Figalli}*{Theorem 1.1} for $p=1$ and \cite{FF}*{Theorem 1.1} for $p\in (1,\infty)$).
A minimizer is called a \emph{$p$-optimal transport map} from $\mu$ to~$\nu$.
There are very few cases for which a $p$-optimal transport map can be determined explicitly  even for $M=\mathbb{R}^n$:
On $\mathbb{R}$, a $p$-optimal transport map is expressed as  the cumulative distribution functions  (e.g. see~\cite{Sam}*{Chapter~2}).
On $\mathbb{R}^n$ with $n\in \mathbb{N}$, if $\nu$ is a translation (resp.\,a dilation)  of $\mu$, then  the translation (resp.\,the dilation) becomes a $p$-optimal transport map from $\mu$ to $\nu$.
Moreover, for $p=2$, 
we can find a $2$-optimal transport map between elliptical distributions determined by the same density function (see~\cite{DL}*{\S3}).

We give an example of $\mu,\nu\in \mathcal{P}_p(\mathbb{S}^n)$ for which 
a $p$-optimal transport map is determined explicitly.
Since this  is a simple example, related to \cite{Vi}*{Exercise~7.39(v)},
it may already be known.
However no reference could be found.

Let us  fix our notation.
We denote by 
$\langle \cdot,\cdot \rangle$ and~$|\cdot|$
the Euclidean inner product and the Euclidean norm, respectively.
Let  $(e_1,\cdots, e_{n+1})$ be the natural basis of $\mathbb{R}^{n+1}$.
For $z\in \mathbb{S}^n$,  set 
\begin{align*}
\theta_z\coloneqq \arccos \langle z, e_{n+1}\rangle=d_{\mathbb{S}^n}(z,e_{n+1}),
\qquad
z^{\perp}\coloneqq z-\langle  z, e_{n+1}\rangle e_{n+1}.
\end{align*}
For a set $E$, its characteristic function is denoted by  $\mathbf{1}_E$.
\begin{theorem}
Let $n\in \mathbb{N}$ and $p\in [1,\infty)$.
Fix 
$a,b,\alpha,\beta \in [0, \pi]$ with $a<\alpha$, $b<\beta$.
Set  
\begin{align*}
A&\coloneqq \{ x\in \mathbb{S}^{n} \mid d_{\mathbb{S}^{n}}(x, e_{n+1})\in (a,\alpha)  \},
&&
\mu\coloneqq \frac{\mathbf{1}_{A}}{\vol_{\mathbb{S}^{n}}(A)}\vol_{\mathbb{S}^{n}},\\
B&\coloneqq \{ y\in \mathbb{S}^{n}\mid d_{\mathbb{S}^{n}}(y, -e_{n+1})\in (b,\beta)  \},
&&
\nu\coloneqq \frac{\mathbf{1}_{B}}{\vol_{\mathbb{S}^{n}}(B)}\vol_{\mathbb{S}^{n}}.
\end{align*}
Let $\varphi \in C([a,\alpha])\cap C^\infty((a,\alpha))$
be  a strictly decreasing function satisfying 
\[
\frac{1}{\vol_{\mathbb{S}^{n}}(A)}
\int_a^\theta \sin^{n-1} rdr
=\frac{1}{\vol_{\mathbb{S}^{n}}(B)}
\int_{\varphi(\theta)}^\beta\sin^{n-1} rdr
\quad \text{for }\theta\in (a,\alpha).
\]
Then a map $T\colon A\to \mathbb{S}^{n}$ defined by
\[
T(x)=\frac{\sin \varphi(\theta_x)}{\sin \theta_x}x^\perp-\cos\varphi(\theta_x) e_{n+1}
\quad \text{for }x\in A
\]
is a  $p$-optimal transport map from $\mu$ to $\nu$.
\end{theorem}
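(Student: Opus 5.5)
Two things must be shown: $T_\sharp\mu=\nu$, and optimality of $T$. For the latter we check that the graph of $T$ is $c$-cyclically monotone for $c=d_{\mathbb{S}^n}^p$ and then invoke the uniqueness result quoted above (\cite{Figalli}, \cite{FF}). Since $\mu\ll\vol_{\mathbb{S}^n}$, a transport map whose graph lies in a $c$-cyclically monotone set is optimal; indeed, a plan supported on such a set is optimal for continuous costs on a compact space.

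\emph{Step 1: pushforward.} In polar coordinates about $e_{n+1}$ we write $x=\cos\theta\, e_{n+1}+\sin\theta\,\omega$ with $\omega\in\mathbb{S}^{n-1}\subset e_{n+1}^\perp$, so that $d\vol_{\mathbb{S}^n}=\sin^{n-1}\theta\, d\theta\, d\omega$. Then $x^\perp/\sin\theta_x=\omega$ and
\[
T(x)=\sin\varphi(\theta)\,\omega-\cos\varphi(\theta)\,e_{n+1}.
\]
Hence $T(x)$ lies at distance $\varphi(\theta)$ from $-e_{n+1}$ along the same direction $\omega$. Thus $T$ preserves $\omega$ and maps the $\theta$-marginal of $\mu$, with density $\sin^{n-1}\theta/\vol(A)$ times $\vol(\mathbb{S}^{n-1})$, to the corresponding marginal of $\nu$ in the variable $r=d(y,-e_{n+1})$. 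Because $\varphi$ is decreasing, the defining identity says exactly that the distribution function of $\theta$ on $(a,\alpha)$ agrees with the tail of $r$ on $(\varphi(\theta),\beta)$. This is the one-dimensional monotone rearrangement, and it gives $T_\sharp\mu=\nu$. The normalization makes $\varphi(a)=\beta$ and $\varphi(\alpha)=b$.

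\emph{Step 2: optimality.} Set $s(\theta)=\pi-\varphi(\theta)$, which is the polar angle of $T(x)$ from $e_{n+1}$; it is strictly increasing in $\theta$. Every pair $(x,T(x))$ lies on a common meridian through $\omega$, and
\[
d(x,T(x))=|s(\theta)-\theta|,
\]
provided this is at most $\pi$, which holds since both angles lie in $[0,\pi]$. For arbitrary points with polar angles $\theta,s$ and directions $\omega,\omega'$, the spherical law of cosines gives
\[
\cos d=\cos\theta\cos s+\sin\theta\sin s\,\langle\omega,\omega'\rangle\le\cos(\theta-s),
\]
so $d(x,y)\ge|\theta_x-\theta_y|$. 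For a cycle $x_1,\dots,x_k$ we therefore get
\[
\sum_i d(x_i,T(x_{i+1}))^p\ \ge\ \sum_i |\theta_i-s(\theta_{i+1})|^p\ \ge\ \sum_i|\theta_i-s(\theta_i)|^p=\sum_i d(x_i,T(x_i))^p .
\]
The middle inequality is the classical one-dimensional fact: for the convex cost $h(t)=|t|^p$ with $p\ge1$, a monotone increasing map $s$ on $\mathbb{R}$ is $h$-cyclically monotone. This follows from submodularity of $(\theta,s)\mapsto|\theta-s|^p$ and the standard rearrangement inequality, and it holds for $p=1$ as well.

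\emph{Main obstacle.} The delicate point is the reduction in Step 2. We lower bound the spherical distance by the difference in polar angle, and we must make sure the equality case $d(x,T(x))=|\theta-s(\theta)|$ genuinely holds. It holds because $\theta,s\in[0,\pi]$ and the points share the direction $\omega$; at the poles $\omega$ is undefined, but that is a $\mu$-null set. Monotonicity of $s$ is the other essential ingredient, and it is the reason the hypothesis requires $\varphi$ to be decreasing.

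For $p=1$, cyclical monotonicity still yields optimality, although uniqueness may fail. We only need $T$ to be \emph{a} minimizer, and the Kantorovich duality argument supplies this for any continuous cost. Optimality of the plan $(\mathrm{id}\times T)_\sharp\mu$ among all plans then gives optimality of $T$ among maps.
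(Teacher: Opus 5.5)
Your proof is correct. The optimality half follows the paper's argument:
\begin{itemize}
\item the same law-of-cosines bound $d_{\mathbb{S}^n}(x',T(x))\ge|\theta_{x'}-s(\theta_x)|$ with $s=\pi-\varphi$ (the paper's $\sigma$);
\item the same reduction to cyclical monotonicity of an increasing map on $\mathbb{R}$ for the convex cost $|t|^p$, which you prove directly by submodularity and rearrangement where the paper cites one-dimensional results;
\item the same criterion that a plan supported on a $c$-cyclically monotone set is optimal for a continuous cost on a compact space.
\end{itemize}
Your appeal to the quoted uniqueness results and to $\mu\ll\vol_{\mathbb{S}^n}$ plays no role. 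Your formula $d_{\mathbb{S}^n}(x,T(x))=|s(\theta)-\theta|$ is also more careful than the paper's $\pi-\theta_x-\varphi(\theta_x)$, which is negative when $\theta_x+\varphi(\theta_x)>\pi$.

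The genuine difference is in proving $T_\sharp\mu=\nu$. The paper first builds an explicit inverse to get bijectivity $A\to B$. It then conjugates $T$ by stereographic projection into the radial map $\Psi(u)=\tau(\theta_{G(u)})\,u$ on $\mathbb{R}^n$. Finally it verifies the change-of-variables identity by computing $\det D\Psi$, which requires $\varphi'$ obtained by differentiating the defining relation.

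You instead write $\mu$ and $\nu$ in geodesic polar coordinates about $e_{n+1}$ and $-e_{n+1}$ respectively. There each is a product of a radial density proportional to $\sin^{n-1}$ and the uniform measure on $\mathbb{S}^{n-1}$, and $T$ becomes $(\theta,\omega)\mapsto(\varphi(\theta),\omega)$. Everything then reduces to $\varphi$ pushing one radial law onto the other. That is exactly the defining identity, together with $\varphi(a)=\beta$ and $\varphi(\alpha)=b$ for surjectivity.

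Your route is shorter and needs no bijectivity step and no Jacobian. It uses no smoothness of $\varphi$ beyond continuity and monotonicity, and it shows where the defining identity comes from. The paper's route is a fully explicit computation in a single Euclidean chart that avoids any disintegration or product-measure argument, at the cost of the extra smoothness and a longer calculation.
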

\begin{proof}
Firstly, we show the bijectivity of $T\colon A\to B$.
Let $\tilde{\theta}\colon [b,\beta]\to [a,\alpha]$ be the inverse function of $\varphi\colon [a,\alpha]\to [b,\beta]$.
Define a map $S\colon B \to \mathbb{S}^{n}$ by 
\[
S(y)\coloneqq \frac{\sin \tilde{\theta}(\tilde{\varphi}_y) }{\sin \tilde{\varphi}_y} y^\perp+\cos \tilde{\theta}(\tilde{\varphi}_y)e_{n+1}
\quad \text{for }y\in B,
\]
where $\tilde{\varphi}_y\coloneqq \arccos \langle y, -e_{n+1}\rangle$.
Then we see that $T(A)=S, S(B)=A$, and 
\[
S(T(x))=x\quad  \text{for }x\in A,\qquad
T(s(y))=y\quad  \text{for }y\in B.
\]
Thus $T\colon A\to B$ is bijective.

Secondly, 
we  prove $T_\sharp \mu=\nu$.
Let $F\colon \mathbb{S}^n\setminus\{e_{n+1}\} \to \mathbb{R}^n$ 
and $G\colon  \mathbb{R}^n\to \mathbb{S}^n\setminus\{e_{n+1}\}$
be the stereographic projection from $e_{n+1}$ 
and its inverse map, respectively, that is, 
\[
F(z)\coloneqq \frac{z^{\perp}}{1-\langle z, e_{n+1}\rangle }
\quad \text{for }z\in \mathbb{S}^n\setminus\{e_{n+1}\},
\qquad
G(w)\coloneqq \frac{\left(2w, -1+|w|^2\right)}{1+|w|^2}
\quad \text{for }w\in \mathbb{R}^n,
\]
where we regard $z^\perp$ as an element in $\mathbb{R}^n$.
Then we have 
\begin{align}\label{want}
\frac{2^n}{\vol_{\mathbb{S}^n}(A)}
\int_{F(A)} \frac{h(T(G(u)))}{ (1+|u|^2)^{n}}du
=
\int_{\mathbb{S}^n} h\circ T d\mu
=
\int_{\mathbb{S}^n} hd\nu
=
\frac{2^n}{\vol_{\mathbb{S}^n}(B)}
\int_{F(B)} \frac{h(G(v))}{(1+|v|^2)^{n}} dv
\end{align}
for $h\in C_b(\mathbb{S}^n)$.
Since the map 
$\Psi \coloneqq F\circ  T\circ  G \colon F(A) \to F(B)$ is smooth and bijective,
we apply the change of variables $v=\Psi(u)$ to have
\begin{align}
\begin{split}
\label{change}
\int_{F(B)}  \frac{h(G(v))}{(1+|v|^2)^{n}} dv
&=
\int_{F(A)} \frac{h(T(G(u)))}{(1+|\Psi(u)|^2)^{n}} \left|\det D\Psi(u) \right| du\\
&=\int_{F(A)} \frac{h(T(G(u)))}{ (1+|u|^2)^{n}} \left(\frac{1+|u|^2}{1+|\Psi(u)|^2}\right)^{n} \left|\det D\Psi(u) \right|du.
\end{split}
\end{align}
For $u\in F(A)$, it turns out that 
\[
\theta_{G(u)}=\arccos\frac{-1+|u|^2}{1+|u|^2}=2\arctan \frac{1}{|u|},
\qquad
G(u)^{\perp}=\frac{2u}{1+|u|^2}
=2\sin^2 \frac{\theta_{G(u)}}{2} \cdot u.
\]
This yields 
\begin{align*}
\Psi(u)
&=
 \frac{\sin \varphi(\theta_{G(u)})}{\sin \theta_{G(u)}}\cdot
\frac{ G(u)^{\perp}}{1+\cos \varphi(\theta_{G(u)})}
=
\tan \frac{\theta_{G(u)}}{2}
\tan \frac{\varphi(\theta_{G(u)})}{2}
 \cdot u,\\
1+|u|^2&=1+\frac{1}{\tan^2\frac{\theta_{G(u)}}{2}}=\frac{1}{\sin^2\frac{\theta_{G(u)}}{2}},
\qquad
1+|\Psi(u)|^2=1+\tan^2\frac{\varphi(\theta_{G(u)})}{2}=\frac{1}{\cos^2\frac{\varphi(\theta_{G(u)})}{2}}.
\end{align*}
Setting 
\[
\tau(\theta)\coloneqq 
\tan \frac{\theta}{2}\tan \frac{\varphi(\theta)}{2} 
\quad \text{for }\theta\in (a,\alpha),
\]
we calculate 
\begin{align*}
\tau'(\theta)
&=\frac{\tau(\theta)}{\sin \theta}\left[ 1-
\frac{\vol_{\mathbb{S}^n}(B)}{\vol_{\mathbb{S}^n}(A)}
\left(\frac{\sin \theta}{\sin \varphi(\theta)}\right)^n \right],\\
D\Psi(u)
&=\tau\left(\theta_{G(u)}\right)
\left\{
 I_n-\left[1-\frac{\vol_{\mathbb{S}^n}(B)}{\vol_{\mathbb{S}^n}(A)}\left(\frac{\sin \theta_{G(u)}}{\sin \varphi(\theta_{G(u)})}\right)^n\right]\frac{u\otimes u}{|u|^2}\right\}.
\end{align*}
Since we have 
\begin{align*}
D\Psi(u)u
&=\tau\left(\theta_{G(u)}\right)
\frac{\vol_{\mathbb{S}^n}(B)}{\vol_{\mathbb{S}^n}(A)}\left(\frac{\sin \theta_{G(u)}}{\sin \varphi(\theta_{G(u)})}\right)^n u,\\
D\Psi(u)w
&=\tau(\theta_{G(u)})w
\quad \text{for $w\in \mathbb{R}^n$ with $\langle w, u\rangle =0$},
\end{align*}
we conclude 
\begin{align*}
\det D\Psi(u)
&=\tau\left(\theta_{G(u)}\right)^n
\frac{\vol_{\mathbb{S}^n}(B)}{\vol_{\mathbb{S}^n}(A)}\left(\frac{\sin \theta_{G(u)}}{\sin \varphi(\theta_{G(u)})}\right)^n \\
&=
\frac{\vol_{\mathbb{S}^n}(B)}{\vol_{\mathbb{S}^n}(A)}\left(\frac{\sin^2 \frac{\theta_{G(u)}}{2}}{\cos^2 \frac{\varphi(\theta_{G(u)})}{2}}\right)^n
=\frac{\vol_{\mathbb{S}^n}(B)}{\vol_{\mathbb{S}^n}(A)}\left(\frac{1+|\Psi(u)|^2}{1+|u|^2}\right)^n.
\end{align*}
This with \eqref{change} implies \eqref{want}, consequently $T_\sharp \mu=\nu$ follows.

Lastly, 
we show the optimality of $T$.
By Theorem~\cite{Sam}*{Theorem~1.49},
it suffices to check
\[
\sum_{\ell=1}^Ld_{\mathbb{S}^n}(x_\ell, T(x_\ell))^p
\leq 
\sum_{\ell=1}^Ld_{\mathbb{S}^n}(x_{\ell+1}, T(x_\ell))^p
\quad
\text{holds for any $L\in \mathbb{N}$ and $\{x_\ell\}_{\ell=1}^L\subset A$,}
\]
where $x_{L+1}\coloneqq x_1$.
For $x,x'\in A$, 
we compute
\begin{align*}
\langle x', T(x)\rangle
&=\frac{\sin  \varphi(\theta_x)}{\sin \theta_x} \langle x'^\perp, x^\perp\rangle
-\cos \theta_{x'}\cos \varphi(\theta_{x})\\
&\leq \frac{\sin  \varphi(\theta_x)}{\sin \theta_x} |x'^\perp| |x^\perp|
-\cos \theta_{x'}\cos \varphi(\theta_{x})
=\cos(\pi- \theta_{x'}-\varphi(\theta_x)),
\end{align*}
where the equality holds if $x=x'$.
Thus we see that
\begin{align}
\begin{split}
\label{ineq}
\sum_{\ell=1}^Ld_{\mathbb{S}^n}(x_\ell, T(x_\ell))^p
&=
\sum_{\ell=1}^L
(\pi- \theta_{x_\ell}-\varphi(\theta_{x_\ell}))^p,\\
\sum_{\ell=1}^Ld_{\mathbb{S}^n}(x_{\ell+1}, T(x_\ell))^p
&\geq 
\sum_{\ell=1}^L
(\pi- \theta_{x_{\ell+1}}-\varphi(\theta_{x_\ell}))^p.
\end{split}
\end{align}
Define a strictly increasing function $\sigma\colon (a,\alpha)\to (\pi-\beta,\pi-b)$ by 
$\sigma(s)\coloneqq \pi- \varphi(s)$ for $s\in (a,\alpha)$.
Since we have
\[
\frac{\mathbf{1}_{(a,\alpha)}}{\alpha-a} \vol_{\mathbb{R}},
\sigma_{\sharp} \frac{\mathbf{1}_{(a,\alpha)}}{\alpha-a} \vol_{\mathbb{R}} \in \mathcal{P}_p(\mathbb{R})
\]
and the monotonicity 
\[
(\sigma(s)-\sigma(s'))(s-s') \geq 0
\quad \text{for }s,s'\in (a,\alpha),
\]
we observe  from \cite{Sam}*{Lemma~2.8, Theorem~2.9, Theorem~1.38} that 
\[
\sum_{\ell=1}^L  (s_\ell-\sigma(s_\ell) )^p
\leq 
\sum_{\ell=1}^L  (s_{\ell+1}-\sigma(s_\ell) )^p
\quad
\text{for any $\{s_\ell\}_{\ell=1}^L\subset (a,\alpha)$,}
\]
where $s_{L+1}\coloneqq s_1$.
This inequality with the choice $s_\ell=\theta_{x_\ell}$ with \eqref{ineq} 
provides 
\begin{align*}
\sum_{\ell=1}^Ld_{\mathbb{S}^n}(x_\ell, T(x_\ell))^p
&\leq 
\sum_{\ell=1}^Ld_{\mathbb{S}^n}(x_{\ell+1}, T(x_\ell))^p
\end{align*}
as desired.
Thus the proof is achieved.
\end{proof}
\begin{remark}
Let $A,B, \mu,\nu$, and $T$ be as in Theorem.
\setlength{\leftmargini}{20pt} 
\begin{enumerate}
\item
Let $\psi \colon [0,\infty)\to [0,\infty)$ be an increasing, convex function.
Then $s\mapsto \psi(|s|)$ is continuous and convex on $\mathbb{R}$.
Again by
\cite{Sam}*{Lemma~2.8, Theorem~2.9, Theorem~1.38},
$T$ is a minimizer~of 
\[
\inf\left\{\int_{\mathbb{S}^n} \psi\left( d_M(x,\widetilde{T}(x)) \right) d\mu(x)\biggm| \widetilde{T}\colon M\to M \text{ is Borel such that }\widetilde{T}_\sharp \mu=\nu\right\},
\]
where a minimizer is uniquely determined  $\mu$-almost everywhere
if $\psi$ is strictly convex on~$[0,\infty)$.
\item
If $a=\pi-\beta$ and $\alpha=\pi-b$,
then 
$A=B, \mu=\nu$, $\varphi(\theta)=\pi-\theta$, and $T=\mathrm{id}_A$.
\item
For $x\in A$ and $t \in [0,1]$, set 
\[
\theta_x(t) \coloneqq (1-t)\theta_x+t (\pi-\varphi(\theta_x)), \qquad
\gamma_x(t)\coloneqq  \frac{\sin \theta_x(t)}{\sin \theta_x}x^\perp+ \cos \theta_x(t) e_{n+1}.
\]
Then $\gamma_x\colon [0,1]\to M$ is a unique minimal geodesic from $x$ to $T(x)$.
For $t \in [0,1]$, 
defined a map  $T^t \colon A \to M$ by $T^t(x)\coloneqq \gamma_x(t)$
for $x\in A$, and set
\begin{align*}
\mu_t&\coloneqq T^t_\sharp \mu,\\
A^t&\coloneqq T^t(A)=\{z\in \mathbb{S}^n\mid  d_{\mathbb{S}^n}(z,e_{n+1})\in ((1-t)a+t (\pi-\beta), (1-t) \alpha+t (\pi-b))  \}.
\end{align*}
Then $T^0=\mathrm{id}_A$ and $T^1=T$ hold. 
Moreover, for  $t \in (0,1)$,  $\mu^t(A^t )=1$ but $\mu^t$ is not a uniform distribution on $A^t$ unless 
either $n=1$ or  $a=\pi-\beta, \alpha=\pi-b$.
The $1$-parameter family $(\mu^t)_{t\in [0,1]}$ is called a \emph{displacement interpolation} from $\mu$ to $\nu$.
\end{enumerate}
\end{remark}
\begin{ack}
The author would like to thank  Noboru Isobe and Shin-ichi Ohta for useful comments.
The author was supported in part by JSPS KAKENHI Grant Number 
24H00183,
24K21513,
26H01996.
\end{ack}

\end{document}